\newtheorem{theorem}{Theorem}%[section]
\newtheorem{proposition}[theorem]{Proposition}
\newtheorem{lemma}[theorem]{Lemma}
\newtheorem{corollary}[theorem]{Corollary}
\theoremstyle{definition}
\theoremstyle{remark}
 \numberwithin{equation}{section}
\newcommand{\si}{\sigma}
\begin{document}
\title{
 A description of quasi-duo $\mathbb{Z}$-graded rings}
 \author{ {
   Andr\'{e} Leroy$^\dagger$,   Jerzy
Matczuk\footnote{ The research was
 supported by Polish MNiSW grant No. N N201 268435},  Edmund R. Puczy{\l}owski$^* $}\\
\\   $^\dagger$ \normalsize Universit\'{e} d'Artois,  Facult\'{e} Jean Perrin\\
\normalsize Rue Jean Souvraz  62 307 Lens, France\\
   \normalsize  e.mail: leroy@poincare.univ-artois.fr\\
 \\ $^*$ \normalsize Institute of Mathematics, Warsaw University,\\
 \normalsize Banacha 2, 02-097 Warsaw, Poland\\
 \normalsize e.mail: jmatczuk@mimuw.edu.pl, edmundp@mimuw.edu.pl}
\date{ }
\maketitle\markboth{ \bf A.Leroy,  J.Matczuk, E.Puczy{\l}owski}{ \bf
Quasi-duo $\mathbb{Z}$-graded rings}

\begin{abstract}A description of right (left) quasi-duo
 $\mathbb{Z}$-graded rings is given. It shows, in particular,  that a strongly
$\mathbb{Z}$-graded ring is left quasi-duo if and only if it is
right quasi-duo. This gives a partial answer to a problem posed by
Dugas and Lam in \cite{LD}.
\end{abstract}

A ring $R$ with an identity is called \cite{LD} $\it right$ ({\it
left}) {\it quasi-duo} if every maximal right (left) ideal of $R$
is two-sided. Quasi-duo rings were studied in many papers (Cf.
\cite{LD}, \cite{Y} and papers quoted there). The main open
problem in the area asks whether the classes of left and right
quasi-duo rings coincide (it is important,  as it concerns the
problem to what extend the notion of primitivity is left-right
symmetric, Cf. \cite{LD}). This problem was also an initial
motivation for our studies. Namely the results obtained in
\cite{LMP} on quasi-duo skew polynomial rings show that it would
be interesting to examine whether it could be possible to distinct
these classes within $\mathbb{Z}$-graded rings or, more generally,
to describe $\mathbb{Z}$-graded right (left) quasi-duo rings. The
methods of \cite{LMP} are rather specific for skew-polynomial
rings and one cannot apply them to $\mathbb{Z}$-graded rings. In
this paper we find another approach to that problem and describe
$\mathbb{Z}$-graded right (left) quasi-duo rings. This description
shows, in particular,  that a strongly $\mathbb{Z}$-graded ring
is right quasi-duo if and only if it is
  left quasi-duo. Thus, for strongly $\mathbb{Z}$-graded rings, the above
mentioned Dugas-Lam's problem has a positive solution.  As an
application we  also get back in another way the characterization
of right (left) skew polynomial and Laurent polynomial rings
obtained in \cite{LMP}.

The  results  on the Jacobson radical, the pseudoradical  and
maximal ideals of $\mathbb{Z}$-graded rings (see Proposition
\ref{anihilator}, Theorem \ref{pseudoradical}) can be of
independent interest.

 All rings in this paper are associative with
identity. To denote
 that $I$ is an ideal (left ideal, right ideal) of a ring $R$ we
will write $I\lhd R$ ($I<_lR$, $I<_rR$). The Jacobson radical of a
ring $R$ will be denoted by $J(R)$.

It is clear that $R$ is  right (left) quasi-duo if and only if
$R/J(R)$ is right (left) quasi-duo and that Jacobson semisimple
right (left) quasi-duo rings are subdirect sums of division rings,
so they are reduced rings. The class of right (left) quasi-duo
rings is closed under homomorphic images and finite subdirect sums
(Cf.\cite{LD}).

In what follows $\mathbb{Z}$ denotes the additive group of
integers and $R$ denotes a $\mathbb{Z}$-graded ring. Recall that
$R=\bigoplus_{n\in \mathbb{Z}} R_n$, the direct sum of additive
subgroups $R_n$, with $R_nR_m\subseteq R_{n+m}$ for all $n,m\in
\mathbb{Z}$. If $R_nR_m=R_{n+m}$, then $R$ is called {\it
strongly} {\it graded}.

Elements of $\bigcup_{n\in \mathbb{Z}}R_n$ are called {\it
homogeneous}.  Every $ r\in R$ can be written as a finite sum
$r=\sum_{m\leq i \leq n} r_i$, where $r_{i}\in R_{i}$ is called
the {\it homogeneous component} of $r$ of degree $i$. If $r_{m}$
and $r_{n}$ are nonzero, then  the length  $l(r)$ of $r$ is
defined as $n-m+1$. Clearly a nonzero element of $R$ is
homogeneous if and only if its length is equal to 1.

An ideal $I$ of $R$ is called {\it homogeneous} if
$I=\bigoplus_{n\in \mathbb{Z}} (I\cap R_n) $. The largest
homogeneous ideal contained in a given ideal $I$ of $R$ will be
denoted by $(I)_h$.

The following well known result of G.~Bergman (Cf. \cite{Rowen})
plays a substantial role in the paper.

\begin{theorem}\label{bergman}
For every $\mathbb{Z}$-graded ring  $R$
\begin{enumerate}
  \item[(i)] $J(R)$ is a homogeneous ideal;
  \item[(ii)]  If $r\in \bigcup_{0\neq n\in \mathbb{Z}}R_n$, then $1+r$ is
invertible if and only if $r$ is nilpotent.
   \end{enumerate}
\end{theorem}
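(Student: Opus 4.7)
My approach is to establish (ii) first and then derive (i) from it, using an induction on the length $l(a)$.

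For (ii), the implication ``$r$ nilpotent $\Rightarrow$ $1+r$ invertible'' is the usual truncated geometric series $(1+r)^{-1} = \sum_{k=0}^{N-1}(-r)^k$ whenever $r^N=0$, and holds in any ring. For the converse, assume $r \in R_n$ with $n \neq 0$ and $(1+r)s = 1$. Write $s = \sum_\ell s_\ell$ in its homogeneous decomposition (finitely many nonzero terms), and compare components of $(1+r)s = 1$ in each degree: this yields the recurrence $s_\ell + r s_{\ell - n} = \delta_{\ell,0}$. Taking $n > 0$ without loss of generality, a descent on the finite support of $s$ shows the least $\ell^*$ with $s_{\ell^*} \neq 0$ must equal $0$ --- any $\ell^* < 0$ or $0 < \ell^* < n$ would force $s_{\ell^*} = -r s_{\ell^* - n} = 0$ by minimality. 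Hence $s_0 = 1$, and the recurrence then gives $s_{kn} = (-r)^k$ with $s_\ell = 0$ off the nonnegative multiples of $n$. Finiteness of the support finally forces $r^{N+1} = 0$ for some $N$.

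For (i), I argue by induction on $l(a)$. If $l(a) \leq 1$, $a$ is homogeneous. Otherwise, for $a = a_m + \cdots + a_n \in J(R)$ with $l(a) \geq 2$, the plan is to show that the top component $a_n$ lies in $J(R)$; subtracting it then reduces the length and the induction continues. Proving $a_n \in J(R)$ reduces, via quasi-regularity, to showing $1 + r_k a_n$ is invertible for every homogeneous $r_k \in R_k$. When $k + n \neq 0$, $r_k a_n$ is homogeneous of nonzero degree, so by (ii) it suffices to show it is nilpotent. Since $r_k a \in J(R)$ has top homogeneous component $r_k a_n$, the crux becomes a \emph{nilpotent top} lemma: the top nonzero-degree homogeneous component of any element of $J(R)$ is nilpotent. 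I plan to obtain this by analysing the homogeneous expansion $u = \sum_i u_i$ of $(1+b)^{-1}$ for $b \in J(R)$, extracting the annihilation relations $b_n u_q = u_q b_n = 0$ at the extremes of the support (where $u_q$ is the top component of $u$), and iterating.

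The main obstacle is the degenerate case $k + n = 0$, where $r_{-n} a_n$ lands in $R_0$ and (ii) no longer applies directly. The plan is to exploit that the remaining components $r_{-n} a_i$ (with $i < n$) of $r_{-n} a \in J(R)$ all sit in strictly negative degrees, producing a ``lower-triangular'' structure in which the degree-$0$ part of $1 + r_{-n}a$ is $1 + r_{-n} a_n$ itself. Combining the invertibility of $1 + r_{-n} a$ with the inductive hypothesis on length (applied to the lower-degree remainder) should then transfer invertibility to $1 + r_{-n} a_n$ and complete the argument.
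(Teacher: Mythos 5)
First, note that the paper does not prove Theorem \ref{bergman} at all: it quotes it as a well-known result of Bergman with a reference to Rowen's book, so your attempt can only be judged on its own merits. Your proof of part (ii) is correct and complete; it is the standard degree-by-degree analysis of a one-sided inverse of $1+r$. Part (i), however, breaks down at its very first reduction. To prove $a_n\in J(R)$ you must show that $1+ra_n$ is (left) invertible for \emph{every} $r\in R$, and this does not reduce to homogeneous $r$: invertibility of $1+x$ and of $1+y$ does not imply invertibility of $1+x+y$, so knowing that each $1+r_ka_n$ is invertible says nothing about $1+\bigl(\sum_k r_k\bigr)a_n$. The only way to reassemble the sum is to prove the stronger statement that each $r_ka_n$ lies in the ideal $J(R)$ (membership in an ideal is additive, invertibility of $1+{}$ is not); but $r_ka_n$ is then a homogeneous element known only to be the top component of the radical element $r_ka$, whose length need not be smaller than $l(a)$, so you are back at the original problem and the induction does not close.

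The two auxiliary steps you defer are also not routine, and they are precisely where the real difficulty of Bergman's theorem sits. In the ``nilpotent top'' lemma, the relation $b_nu_q=0$ read off from the top degree of $(1+b)u=1$ fails exactly when $n+q=0$: there the degree-$0$ equation gives $b_nu_q=1$ (and $u_qb_n=1$ from the other side), a case you must rule out separately; and even where $b_nu_q=0$ holds, the next equations down mix $b_n$ with the lower components of $b$ (e.g.\ $b_nu_{q-1}+b_{n-1}u_q=0$), so nilpotence of $b_n$ does not follow by simple iteration. Likewise, in the case $k+n=0$ the degree-$0$ component of $(1+r_{-n}a)v=1$ is $(1+r_{-n}a_n)v_0+\sum_{i<n}r_{-n}a_iv_{n-i}$, which involves components $v_j$ with $j>0$; the ``lower-triangular'' structure therefore does not transfer invertibility to $1+r_{-n}a_n$ without a further argument that you have not supplied. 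The known proofs bridge these gaps by a genuinely different device (for instance, passing to $R\otimes_{\mathbb{Z}}\mathbb{Z}[t,t^{-1}]$ and twisting by the unit $t$ to show $J$ is stable under rescaling of the homogeneous components), so as written your part (i) is a plan with essential steps missing rather than a proof.
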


A homogeneous ideal $P$ of $R$ is called {\it graded prime} if
$IJ\subseteq P$ implies $I\subseteq P$ or $J\subseteq P$ for
arbitrary homogeneous ideals $I$ and $J$ of $R$. It is well known
and not hard to check that if $P$ is a prime ideal of $R$, then
$(P)_h$ is a graded prime ideal of $R$. It is also  well known
that a homogeneous ideal of a $\mathbb{Z}$-graded ring is prime if
and only if it is graded prime.

The intersection of all nonzero
graded prime ideals of $R$ will be called the {\it graded
pseudoradical} of $R$. The empty intersection, by definition, is
equal to $R$.

The following result generalizes Lemma 3.2 from \cite{Matczuk2}.

\begin{theorem}\label{pseudoradical}
Suppose that a $\mathbb Z$-graded ring $R$ contains a maximal
ideal $M$ such that $(M)_h=0$. Then the graded pseudoradical of
$R$ is nonzero.
\end{theorem}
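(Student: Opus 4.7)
The plan is to extract from a minimum-length element of $M$ a nonzero homogeneous ideal lying in every nonzero graded prime of $R$, thereby witnessing that the graded pseudoradical is nonzero.

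\textbf{Setup.} If $M=0$, then $R$ is simple, has no proper nonzero graded prime ideal, and the (empty) intersection defining the graded pseudoradical equals $R\ne 0$. Assume $M\ne 0$. A nonzero homogeneous element $h\in M\cap R_k$ would generate a nonzero homogeneous ideal $RhR\subseteq M$, contradicting $(M)_h=0$; hence $R_k\cap M=0$ for every $k$, so every nonzero element of $M$ has length at least $2$. Pick $a=a_m+\cdots+a_n\in M$ of minimum length $\ell\ge 2$, with $a_m,a_n\ne 0$. Note also that $M$ is prime (as $R/M$ is simple, hence prime), so by the fact recalled in the excerpt, the homogeneous ideal $(M)_h=0$ is graded prime; i.e., $R$ is a graded prime ring.

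\textbf{Minimality-induced equivalences.} For any homogeneous $b,c\in R$, the element $bac=\sum ba_ic\in M$ has length at most $\ell$; minimality forces either $bac=0$ (in which case $ba_ic=0$ for all $i$) or $l(bac)=\ell$ (in which case the extremal homogeneous components $ba_mc$ and $ba_nc$ are both nonzero). Consequently,
\[
ba_mc=0 \iff ba_nc=0 \iff ba_ic=0 \text{ for every } i.
\]
Now let $P$ be any nonzero graded prime of $R$. Homogeneity forces $P\not\subseteq M$ (else $P\subseteq (M)_h=0$, contradicting $P\ne 0$), so by maximality of $M$ one has $P+M=R$. Writing $p+q=1$ with $p\in P$, $q\in M$, and decomposing $p=\sum p_i$ (each $p_i\in P$ by homogeneity of $P$), pick a nonzero homogeneous component $p_{i_0}$ (which exists as $p\ne 0$, for otherwise $1=q\in M$). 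The element $ap_{i_0}\in M\cap P$ has length at most $\ell$, so each homogeneous component $a_jp_{i_0}$ lies in $P$ (trivially if $ap_{i_0}=0$, and by homogeneity of $P$ otherwise). In particular $a_mp_{i_0},\,a_np_{i_0}\in P$.

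\textbf{Main obstacle.} The remaining task is to deduce $a_n\in P$ (and symmetrically $a_m\in P$) from $a_np_{i_0}\in P$; then the nonzero homogeneous ideal $Ra_nR$ is contained in every nonzero graded prime, proving the theorem. A naive appeal to primality of $P$ applied to $a_nRp_{i_0}\subseteq P$ only returns the tautology $p_{i_0}\in P$. My plan is to vary the choice of $p$ (exploiting that $P+M=R$ admits many decompositions, and that for any nonzero homogeneous $c\notin M$ one gets $RcR+M=R$, producing further $p'\in P$ with prescribed degree structure) and to combine the ensuing family of relations $a_n\cdot(\text{various homogeneous elements of }P)\in P$ with the equivalences from the minimality step to produce a relation that forces $a_n$ itself into $P$. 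Bergman's Theorem~\ref{bergman}(ii) enters to rule out nilpotent homogeneous contributions to $p$ of nonzero degree: if some $p_{i_0}\in R_{i_0}$ ($i_0\ne 0$) is nilpotent, then $1-p_{i_0}$ is a unit and its image modulo $M$ is constrained, restricting the admissible structures of $p$. Completing this cancellation step is the technical heart of the argument and the chief obstacle.
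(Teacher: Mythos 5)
Your proposal is not a proof: you set up the right preliminaries (a minimal-length $a\in M$, the fact that $R$ is graded prime because $(M)_h=0$ is graded prime, and the decomposition $1=p+q$ coming from $P+M=R$), but the step you label the ``chief obstacle'' --- deducing $a_n\in P$ --- is exactly where the argument must happen, and you leave it open. Worse, the target itself is mischosen: there is no reason a \emph{single} homogeneous component $a_n$ of one fixed minimal element should lie in every nonzero graded prime, and as you already noticed, everything you can extract from $a_np_{i_0}\in P$ is tautological because $P$ is an ideal. The intended conclusion is weaker and of a different shape: one shows that a certain \emph{product} of homogeneous sets lies in every nonzero graded prime $Q$, namely $CRD\subseteq Q$, where $C$ (resp.\ $D$) is the set of top (resp.\ bottom) components of the nonzero elements of $M$ of minimal length. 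Graded primeness of $R$ then gives $CRD\neq 0$, which suffices.

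The missing idea is a \emph{second} minimality argument, run inside $M$ relative to the chosen prime $Q$. From $1=b+q$ with $b\in M$, $q\in Q$, homogeneity of $Q$ forces $b$ to have exactly one homogeneous component outside $Q$ (the degree-zero one, since $1-b_0\in Q$ and $-b_i\in Q$ for $i\neq 0$). Among all elements of $M$ with exactly one component outside $Q$, take $b=\sum_{s\le l\le t}b_l$ of smallest length, with $b_k\notin Q$. If $k\neq t$ and some minimal-length $r\in M$ had top component $r_n\notin Q$, graded primeness of $Q$ yields a homogeneous $c$ with $b_kcr_n\notin Q$, and the element $u=bcr_n-b_tcr\in M$ again has exactly one component outside $Q$ but strictly smaller length (the top degrees cancel), a contradiction; hence $C\subseteq Q$. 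Symmetrically, $k=t$ forces $D\subseteq Q$. Either way $CRD\subseteq Q$. Your ``minimality-induced equivalences'' for $bac$ are correct but point in a different direction and are never used to close the argument; and the appeal to Bergman's theorem to constrain nilpotent components of $p$ plays no role in any complete version of this proof. As written, the proposal is a plausible opening plus an honest admission that the decisive step is missing.
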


\begin{proof}

 Let  $a=\sum_{m\leq i\leq n}a_i$ be a nonzero element of
$M$ of minimal length, where $a_m\neq 0\neq a_n$. Since $(M)_h=0$,
$l(a)\geq 2$.

Let $C$ (resp. $D$) denote the sets of all $n$-th (resp. $m$-th )
components of nonzero elements from $M\cap (\bigoplus_{m\leq i\leq
n}R_i)$.  Notice that $C$ and $D$ are non empty homogeneous sets
depending only on $M$.

If $R$ has no nonzero graded prime ideals, then the graded
pseudoradical of $R$ is equal to $R$, so the thesis holds.

Suppose now that we can pick a nonzero graded prime ideal $Q$ of
$R$. Then $M+Q=R$, so $1=b+q$, where $b=\sum_{s\leq l\leq t}b_l\in
M$ and $1-b_0\in Q$ and $b_i\in Q$, for $s\leq i\leq t$, $i\neq
0$.  This implies that precisely one homogeneous component of
$b=\sum_{s\leq l\leq t}b_l\in M$ is not in $Q$.  Suppose that $b$
is an element in $M$ with the smallest possible length amongst the
elements of $M$ having precisely one homogeneous component not in
$Q$.  Let us write $b=\sum_{s\leq l\leq t}b_l\in M$ with
$b_k\notin Q$.

If $k\ne t$ we claim that $C\subseteq Q$.  If not, then there
exists $r=\sum_{m\leq i\leq n}r_i\in M$ such that $r_n\not\in Q$.
Since $Q$ is a prime graded ideal, there exists $c\in R_w$, for
some $w\in\mathbb{Z}$, such that $b_kcr_n\not\in Q$. Notice that
$n-m+1=l(r)\le l(b)= t-s+1$ and the element $u=bcr_n-b_tcr\in M$
is such that precisely one homogeneous component of $u$ (namely
$u_{k+w+n}$) is not in $Q$. Moreover, since
$(bcr_n)_l=(b_tcr)_l=0$ if $l<s+w+n$ and $u_{t+w+n}=0$, we get
$l(u)<l(b)$, which is impossible, by the choice of $b$. This
proves the claim.

If $k=t$ we can prove in a similar way that $D\subseteq Q$.

We conclude that $CRD\subseteq Q$ for any nonzero graded prime
ideal $Q$ of $R$.  Since $M$ is prime and $(M)_h=0$, the ring $R$
is a graded prime ring and hence $CRD\ne 0$.  This yields the
desired result.
\end{proof}

In what follows we denote by $\mathcal A$ the set of all maximal
right ideals $M$ of $R$ such that $R_n\not\subseteq M$, for some
$0\neq n\in \mathbb{Z}$ and by $\mathcal B$ the set of remaining
maximal right ideals of $R$. Set $A(R)=\bigcap_{M\in \mathcal A}M$
and $B(R)=\bigcap_{M\in \mathcal B}M$.

It is easy to describe $B(R)$. Note that $U=\sum_{0\neq n\in
\mathbb{Z}}R_{-n}R_n\lhd R_0$. It is clear that if $M\in \mathcal
B$, then $M=M_0+\bigoplus_{0\neq n\in \mathbb{Z}}R_n$ for a
maximal right ideal $M_0$ of $R_0$ containing $U$. Consequently
$B(R)=J+\sum_{0\neq n\in \mathbb{Z}}R_n$, where $J$ is the ideal
of $R_0$ containing $U$ such that $J(R_0/U)=J/U$.  In particular,
$B(R)$ is a two-sided ideal of $R$.

If $R$ is strongly graded,
then for every $0\neq n\in \mathbb{Z}$, $R_0=R_nR_{-n}$. This
shows that in this case ${\mathcal B}=\emptyset$, so $B(R)=R$ and
$A(R)=J(R)$.

Now we will describe $A(R)$. Let $A_l=\{ r\in R \mid R_nr\subseteq J(R)$, for every
$0\neq n\in \mathbb{Z} \}$ and $A_r=\{ r\in R \mid rR_n\subseteq
J(R)$, for every $0\neq n\in \mathbb{Z} \}$.

\begin{proposition}\label{anihilator}

 Let  $R$ be a $\mathbb{Z}$-graded ring. Then:
\begin{enumerate}
  \item[(i)] $A(R)=A_l=A_r$
  \item[(ii)] $A(R)\cap (\bigoplus_{0\neq n\in \mathbb{Z}}R_n)=J(R)\cap (\bigoplus_{0\neq n\in
 \mathbb{Z}}R_n)$.
 \end{enumerate}
\end{proposition}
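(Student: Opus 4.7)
My strategy is to establish $A(R) = A_r$ by two direct inclusions---which also yields (ii) as a by-product---and then to prove $A_l = A_r$ via a homogeneous-decomposition argument relying on Bergman's Theorem \ref{bergman}.

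For $A_r \subseteq A(R)$: take $r \in A_r$, $M \in \mathcal A$, pick $a \in R_n \setminus M$ with $n \neq 0$, and use $aR + M = R$ to write $1 = at + m$. Then $r = rat + rm$: since $ra \in rR_n \subseteq J(R) \subseteq M$ forces $rat = (ra)t \in M$, and $rm \in M$ directly, we obtain $r \in M$. For $A(R) \subseteq A_r$ together with (ii): take $r \in A(R)$ and $n \neq 0$. When $M \in \mathcal A$, $r \in M$ gives $rR_n \subseteq M$ directly; when $M \in \mathcal B$, the preceding discussion shows $M \supseteq \bigoplus_{k\neq 0}R_k$ and $M \supseteq U = \sum_{k\neq 0}R_{-k}R_k$, so decomposing $r = \sum r_i$ and $a \in R_n$, each summand $r_i a \in R_{i+n}$ lies in $M$ (in $\bigoplus_{k\neq 0}R_k$ when $i+n \neq 0$; in $R_{-n}R_n \subseteq U$ when $i = -n$). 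This gives $r \in A_r$, and (ii) follows from the same case split: any $r \in A(R) \cap \bigoplus_{n\neq 0}R_n$ lies in every $M \in \mathcal A$ by hypothesis and in every $M \in \mathcal B$ via $\bigoplus_{n\neq 0}R_n \subseteq M$, so $r \in J(R)$.

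The heart of (i) is the equality $A_l = A_r$, where the left/right asymmetry is real. Since $A_r = A(R)$ is homogeneous (because $J(R)$ is, by Theorem \ref{bergman}(i)), and an analogous verification shows $A_l$ is homogeneous, it suffices to treat homogeneous $r \in A_l \cap R_j$. When $j \neq 0$, the defining condition at $n = j$ yields $r^2 = r \cdot r \in R_j r \subseteq J(R)$; since $r^2 \in R_{2j}$ with $2j \neq 0$, Theorem \ref{bergman}(ii) makes $r^2$, and hence $r$, nilpotent. A degree-splitting of $1 - sr$ modulo $J(R)$ followed by a second application of Bergman to $1 - s_0 r$ (whose square $s_0 \cdot ((rs_0)r)$ lies in $J(R)$ because $(rs_0)r \in R_j r \subseteq J(R)$) then places $r$ in $J(R) \subseteq A_r$.

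The delicate case is $j = 0$. Given $r \in A_l \cap R_0$ and $a \in R_n$ with $n \neq 0$, I show $ra \in J(R)$ by verifying that $1 - s(ra)$ is invertible for every $s \in R$. Writing $s = \sum s_i$ and noting $s_i r \in R_i r \subseteq J(R)$ for $i \neq 0$ reduces the task to invertibility of $1 - s_0 ra$; since $s_0 ra \in R_n$ is homogeneous of nonzero degree, Theorem \ref{bergman}(ii) translates this into nilpotency of $s_0 ra$, which follows from the identity $(s_0 ra)^2 = s_0(ra s_0 r)a$ combined with $(as_0)r \in R_n r \subseteq J(R)$ forcing $ras_0 r \in r J(R) \subseteq J(R)$, so that $(s_0 ra)^2 \in R\,J(R)\,R \subseteq J(R) \cap R_{2n}$ is nilpotent by Bergman once more. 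The reverse inclusion $A_r \subseteq A_l$ is entirely dual, with left and right interchanged in all products.
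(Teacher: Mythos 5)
Your argument for the inclusion $A_r\subseteq A(R)$ contains a genuine error. Having written $1=at+m$ with $t\in R$, $m\in M$, you decompose $r=rat+rm$ and assert that $rm\in M$ ``directly''. But $M$ is only a maximal \emph{right} ideal, so $m\in M$ does not imply $rm\in M$: left multiples of elements of $M$ need not stay in $M$, and this is precisely the left/right asymmetry the whole proposition is about. The step as written fails. The conclusion is true and the repair is short: since $A_r$ is a right ideal with $A_rR_n\subseteq J(R)$, one has $(A_r+M)R_n\subseteq J(R)+MR_n\subseteq M$, while $RR_n\supseteq R_n\not\subseteq M$; hence $A_r+M\neq R$, and maximality of $M$ gives $A_r\subseteq M$ (this is the paper's argument). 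Equivalently, assuming $r\notin M$, write $1=m+rs$ and multiply on the \emph{right} by $a$: then $a=ma+(rs)a\in M+A_rR_n\subseteq M$, a contradiction.

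The rest of the proposal is correct and in places genuinely different from the paper. Your proof of (ii) and of $A(R)\subseteq A_r$ via the explicit description of the ideals in $\mathcal B$ (each contains $\bigoplus_{0\neq n}R_n$ and $U$) is more direct than the paper's, which instead shows that the square of the ideal generated by $A(R)\cap(\bigoplus_{0\neq n}R_n)$ lies in $J(R)$ and invokes semiprimeness of $R/J(R)$. For $A_l=A_r$ the paper again uses the semiprimeness trick (the left ideal $A_lR_n$ satisfies $(A_lR_n)^2\subseteq J(R)$, hence $A_lR_n\subseteq J(R)$), whereas you reduce to homogeneous elements and apply Theorem \ref{bergman}(ii) twice to verify quasi-regularity by hand; your route is longer but makes explicit where the nilpotence comes from, and both the $j\neq 0$ and $j=0$ computations check out. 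Once the $A_r\subseteq A(R)$ step is repaired as above, the proof is complete.
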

\begin{proof}
  (i). It is clear that $A_l\lhd R$. Hence $A_lR_n<_lR$, for every $0\neq n\in
\mathbb{Z}$. Since $(A_lR_n)^2\subseteq J(R)$ and $R/J(R)$ is
semiprime, $A_lR_n\subseteq J(R)$. This proves that $A_l\subseteq
A_r$. Dual arguments give the opposite inclusion and show that
$A_l=A_r$.

Take any $M\in \mathcal A$. Then $R_n\not\subseteq M$, for some
$0\neq n\in \mathbb{Z}$. Obviously $(A_r+M)R_n\subseteq M$. Thus
$A_r+M\neq R$ and maximality of $M$ implies that $A_r\subseteq M$.
Consequently $A_r\subseteq A(R)$. Clearly $A(R)\cap B(R)=J(R)$,
$B(R)\lhd R$ and $A(R)<_rR$, so $A(R)B(R)\subseteq J(R)
$. Hence, since $\bigoplus_{0\neq n\in \mathbb{Z}}R_n\subseteq
B(R) $, we get that $A(R)\subseteq A_r$.

(ii). By (i), $A(R)R_n+R_mA(R)\subseteq J(R)$, for arbitrary $
n,m\in \mathbb{Z}\setminus \{0\}$. This implies that if $I$ is the
ideal of $R$ generated by $A(R)\cap (\bigoplus_{0\neq n\in
\mathbb{Z}}R_n)$, then $I^2\subseteq J(R)$. Consequently $A(R)\cap
(\bigoplus_{0\neq n\in \mathbb{Z}}R_n)\subseteq I\subseteq J(R)$.
Now it is easy to complete the proof of (ii).\end{proof}

\begin{theorem}\label{idealy maksymalne}
If a $\mathbb Z$-graded ring $R$ is right (left) quasi-duo, then
$R/M$ is a field, for every $M\in \mathcal A$.
\end{theorem}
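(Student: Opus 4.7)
The argument splits into three stages, plus a remark on the main obstacle.

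\emph{Set-up.} Since $R$ is right quasi-duo, $M$ is two-sided, so $D := R/M$ is a ring whose only proper right ideal is zero, hence a division ring. It therefore suffices to show that $D$ is commutative.

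\emph{Reduction to $(M)_h = 0$.} The homogeneous ideal $(M)_h$ is two-sided and contained in $M$, and both the $\mathbb{Z}$-grading and right quasi-duoness pass to the quotient $R/(M)_h$; moreover the image of $M$ still fails to contain some $R_n$ with $n\neq 0$. Replacing $R$ by $R/(M)_h$, one may assume $(M)_h = 0$, at which point $R$ is a graded prime ring. Theorem~\ref{pseudoradical} then yields a nonzero graded pseudoradical $P$, and since $P$ is homogeneous it cannot be contained in $M$, so $P + M = R$.

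\emph{Commutativity.} Pick $r \in R_n$ with $n \neq 0$ and $r \notin M$. The image $\bar r$ is a unit of $D$, so $r^k \notin M$ for every $k \geq 1$; in particular $r$ is not nilpotent in $R$, and Bergman's Theorem~\ref{bergman}(ii) makes $1+r$ a non-unit. My plan is to show that $\bar r$ is central: for every homogeneous $a \in R_m$ the commutator $[r,a] = ra - ar$ lies in $R_{n+m}$, and when $n+m \neq 0$ Proposition~\ref{anihilator}(ii) identifies $[r,a] \in M$ with $[r,a] \in J(R)$. I expect to extract the latter from Bergman by arguing that $1 + [r,a]$ must be invertible in $R$, which forces $[r,a]$ to be nilpotent and hence, being a nilpotent element of the division ring $D$, to vanish modulo $M$. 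The residual case $n+m = 0$ should be reduced to the previous one by multiplying by a suitable homogeneous element of nonzero degree to shift the degree sum away from zero. Once each such $\bar r$ and, by a symmetric argument, each element of $\bar R_0$ is shown to be central, the additive generation of $D$ by the images $\bar R_k$ yields commutativity.

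The main obstacle is that $M$ is typically not homogeneous, so congruences such as $rs \equiv 1 \pmod M$ cannot be analyzed degree-by-degree and the grading loses direct grip on $D$. The device I envision for getting around this is to lift $\bar r^{-1}$ inside the graded ideal $P$, possible thanks to $P + M = R$; within $P$ the homogeneous decomposition is compatible with the ideal structure, and Bergman's theorem, combined with the graded primeness of $R$, regains its force.
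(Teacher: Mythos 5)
Your set-up and reduction agree with the paper: $D=R/M$ is a division ring, one passes to $R/(M)_h$ to assume $(M)_h=0$, Theorem~\ref{pseudoradical} gives a nonzero graded pseudoradical $P$, and $P+M=R$ since $P$ is a nonzero homogeneous ideal while $(M)_h=0$. The gap is in the commutativity stage. Your entire argument hinges on the assertion that $1+[r,a]$ ``must be invertible in $R$,'' from which Bergman would give nilpotence of the homogeneous commutator $[r,a]$ and hence its vanishing in $D$. You offer no reason for this invertibility, and none is available at this point: by Theorem~\ref{bergman}(ii) the invertibility of $1+[r,a]$ is \emph{equivalent} to the nilpotence of $[r,a]$, which is essentially what you are trying to prove, so the step is circular. (One can check a posteriori that such commutators do lie in $J(R)$, but only by invoking the conclusion of the very theorem under proof for all the other ideals in $\mathcal A$.) Likewise, the appeal to Proposition~\ref{anihilator}(ii) runs in the wrong direction: to place $[r,a]$ in $M$ you would first need it in $J(R)$ (or in $A(R)$), which is the missing content, not a consequence of that proposition. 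The envisioned ``device'' of lifting $\bar r^{-1}$ into $P$ is never carried out and does not by itself supply the missing step.

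The paper's mechanism, which you should compare with, exploits Bergman's theorem in the opposite direction and works with elements of $P$ rather than with arbitrary $r\in R_n\setminus M$. Since $R$ is a domain (this follows from $(M)_h=0$ and $R/M$ being a division ring), any nonzero $a\in P_n$ with $n\neq 0$ is \emph{not} nilpotent, so $1+a$ is \emph{not} invertible and lies in some maximal right ideal $T$, which is two-sided by quasi-duoness. Then $(T)_h$ is a prime homogeneous ideal that cannot contain $P$ (else $1=(1+a)-a\in T$), so $(T)_h=0$; and for every homogeneous $b$ the element $ab-ba=(1+a)b-b(1+a)$ lies in $T$ and is homogeneous, hence lies in $(T)_h=0$. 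This shows $P_n\subseteq Z(R)$ for all $n\neq 0$, then $P_0\subseteq Z(R)$ because $R$ is a domain and $P_0P_n\subseteq Z(R)$ for some $n$ with $P_n\neq 0$, and finally $R/M=(M+P)/M$ is commutative because it is generated by the image of the central set $P$. Your proposal is missing precisely this pivot --- producing an auxiliary maximal (two-sided) ideal $T$ containing $1+a$ with $(T)_h=0$ and reading off commutators inside $(T)_h$ --- and without it the commutativity claim does not go through.
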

\begin{proof}  We will prove the result when $R$ is right quasi-duo.
If $R$ is left quasi-duo, symmetric arguments can be applied. Let
$M\in \mathcal A$.
  Passing to the factor  ring  $R/(M)_h$, we can assume
without loss of generality that $(M)_h=0$. Since $R$ is right
quasi-duo, $R/M$ is a division ring. Making use of those two
facts, one can easily check that $R$ is a domain.  Moreover, by
Theorem \ref{pseudoradical}, the graded pseudoradical $P$ of $R$
is nonzero.

Let $0\ne n\in \mathbb{Z}$  and $a\in P_n=P\cap R_n $.  Clearly
$a$ is not nilpotent, as $R$ is a domain. Thus, by Theorem
\ref{bergman}, $1+a$ is not invertible. Hence there exists a
maximal right ideal $T$ of $R$ containing $1+a$. Since $R$ is
 quasi-duo, $T\lhd R$. Now $(T)_h$ is a prime homogeneous
ideal of $R$, so if $(T)_h\neq 0$, then $P\subseteq T$. This is
impossible as otherwise $1=(1+a)-a\in T$. Therefore $(T)_h=0$. Now
for every homogeneous element $b$ of $R$, $ab-ba=(1+a)b-b(1+a)\in
(T)_h=0$. This shows that $a$ belongs to the center $ Z(R)$ of $R$
and implies that  $P_n\subseteq Z(R)$, for all nonzero
$n\in\mathbb{Z}$.  Since $ M\in \mathcal A$, by definition,  there
exists $0\ne m\in \mathbb Z$ such that $R_m\not\subseteq M$. In
particular $R_m\ne 0$. Therefore, since $P$ is a nonzero
homogeneous ideal and $R$ is a domain, we can pick a nonzero
integer $n$ such that $P_n\ne 0$.  Then $ P_0P_n\subseteq
P_n\subseteq Z(R)$ and, as    $R$ is a domain, $P_0\subseteq Z(R)$
follows. The above implies that  $P\subseteq Z(R)$ and shows that
the division ring $R/M=(M+P)/M$ is   commutative, i.e. it is a
field.
\end{proof}

\begin{theorem}\label{charakteryzacja}
 A $\mathbb{Z}$-graded ring
$R$ is  right (left) quasi-duo if and only if $R_0$ is right
(left) quasi-duo and $R/A(R)$ is a commutative ring.
\end{theorem}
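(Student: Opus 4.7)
The plan is to prove the two implications separately; I will treat only the right quasi-duo case, as the left case is symmetric.

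For the forward direction, assume $R$ is right quasi-duo. Commutativity of $R/A(R)$ follows immediately from Theorem~\ref{idealy maksymalne}, which tells us that $R/M$ is a \emph{field} for every $M\in\mathcal A$: since $A(R)=\bigcap_{M\in\mathcal A}M$ by definition, the canonical injection $R/A(R)\hookrightarrow\prod_{M\in\mathcal A}R/M$ realises $R/A(R)$ as a subdirect product of fields, hence as a commutative ring. To prove $R_0$ is right quasi-duo, I would take a maximal right ideal $N$ of $R_0$ and lift it to $\widetilde N:=N+\bigoplus_{0\neq n\in\mathbb Z}R_n$, checking that $\widetilde N$ is a proper right ideal of $R$ (its intersection with $R_0$ is $N$) and that $R/\widetilde N\cong R_0/N$ is simple as a right $R$-module, so $\widetilde N$ is maximal. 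The quasi-duo hypothesis then forces $\widetilde N\lhd R$, and intersecting with $R_0$ yields that $N$ is two-sided in $R_0$.

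For the converse, assume $R_0$ is right quasi-duo and $R/A(R)$ is commutative. Given a maximal right ideal $M$ of $R$, split according to whether $M\in\mathcal A$ or $M\in\mathcal B$. If $M\in\mathcal A$, then $A(R)\subseteq M$ and commutativity of $R/A(R)$ makes $M$ two-sided at once. If $M\in\mathcal B$, then the description given just before Proposition~\ref{anihilator} presents $M$ as $M_0+\bigoplus_{0\neq n\in\mathbb Z}R_n$ for a maximal right ideal $M_0$ of $R_0$ with $M_0\supseteq U$, where $U=\sum_{0\neq n\in\mathbb Z}R_{-n}R_n$. The hypothesis on $R_0$ gives $M_0\lhd R_0$, and then a short case analysis verifies $R_kM\subseteq M$ for every $k\in\mathbb Z$.

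The only genuinely delicate point is this last verification: when $k\neq 0$ and one multiplies $R_k$ by the $R_{-k}$-summand of $M$, the product lands back in $R_0$ and could \emph{a priori} escape $M$; but $R_kR_{-k}\subseteq U\subseteq M_0\subseteq M$ traps it. All other products are either swallowed by $\bigoplus_{0\neq n\in\mathbb Z}R_n\subseteq M$ or by $M_0$ through its two-sidedness in $R_0$. This is the sole place where the containment $U\subseteq M_0$, built into the description of ideals in $\mathcal B$, is actually exploited, and it is the structural reason the class $\mathcal B$ was singled out in precisely this way.
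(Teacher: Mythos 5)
Your forward-direction argument that $R_0$ is right quasi-duo has a genuine gap: the set $\widetilde N:=N+\bigoplus_{0\neq n\in\mathbb Z}R_n$ is in general \emph{not} a right ideal of $R$. Closure under right multiplication forces $R_nR_{-n}\subseteq \widetilde N\cap R_0=N$ for every $n\neq 0$, i.e. $U=\sum_{0\neq n\in\mathbb Z}R_{-n}R_n\subseteq N$ --- precisely the condition you yourself point out, in the converse direction, as the structural reason the ideals in $\mathcal B$ have this shape. An arbitrary maximal right ideal $N$ of $R_0$ need not contain $U$: for $R=\mathbb{C}[x,x^{-1}]$ one has $R_0=\mathbb{C}$, $N=0$, $U=\mathbb{C}$, and $\widetilde N=\bigoplus_{n\neq 0}\mathbb{C}x^n$ is not a right ideal, since $x\cdot x^{-1}=1\notin\widetilde N$. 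The paper circumvents this by lifting $N$ to the right ideal $NR$ instead: its degree-zero component is $NR_0=N$, so $NR$ is proper and is contained in some maximal right ideal $T$ of $R$; the quasi-duo hypothesis makes $T$ two-sided, maximality of $N$ in $R_0$ gives $T\cap R_0=N$, and hence $N\lhd R_0$. Some repair of this kind is indispensable for your argument.

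The rest of your proposal is correct. The commutativity of $R/A(R)$ is obtained exactly as in the paper (subdirect product of the fields $R/M$, $M\in\mathcal A$, with the degenerate case $\mathcal A=\emptyset$ giving $R/A(R)=0$). Your converse is a valid alternative to the paper's: the paper passes to $R/J(R)$, realized as a homomorphic image of a subdirect sum of $R/I$ and $R/A(R)$, where $I$ is the ideal generated by $\bigcup_{0\neq n\in\mathbb Z}R_n$ and Proposition \ref{anihilator}(i) gives $I\cap A(R)\subseteq J(R)$; you instead handle each maximal right ideal directly via the dichotomy $\mathcal A$ versus $\mathcal B$, which is more elementary and makes explicit where the containment $U\subseteq M_0$ is used.
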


\begin{proof}
 Suppose that $R$ is  right quasi-duo. Let $M$ be a maximal right ideal
of $R_0$. Clearly $MR$ is a proper right ideal of $R$.
Consequently $MR$ is contained in a maximal right ideal $T$ of
$R$. Since $R$ is right quasi-duo, $T\lhd R$. It is clear that
$M=T\cap R_0$, so $M\lhd R_0$. Thus $R_0$ is a right quasi-duo
ring.

When $\cal A\ne \emptyset$,   Theorem \ref{idealy maksymalne}
implies that $R/A(R)$ is a subdirect sum of fields, so it is a
commutative ring. If $\cal A= \emptyset$, then $A(R)=R$ and the ring $R/A(R)$ is
also commutative.

Suppose now that $R_0$ is  right quasi-duo and $R/A(R)$ is
commutative. Let $I$ be the ideal of $R$ generated by
$\bigcup_{0\neq n\in \mathbb{Z}}R_n$. Then, by Proposition
\ref{anihilator}(i), $IA(R)\subseteq J(R)$. Hence $(I\cap
A(R))^2\subseteq J(R)$ and semiprimeness of $J(R)$ implies that
$I\cap A(R)\subseteq J(R)$. This shows that $R/J(R)$ is a
homomorphic image of a subdirect sum of rings $R/I$ and $R/A(R)$.
Clearly $R/I$ is a homomorphic image of $R_0$. Consequently both
$R/I$ and $R/A(R)$ are right quasi-duo, so, further, $R/J(R)$ and
$R$ are right quasi-duo.

When $R$ is left quasi-duo, symmetric arguments apply.
\end{proof}
  Theorem \ref{charakteryzacja} immediately gives the following

\begin{corollary}
\label{cor2}
 Suppose a $\mathbb{Z}$-graded
ring $R$ is right quasi-duo. Then: \\
1.  $R_0$ is right  quasi-duo;\\
2. $R$ is left quasi-duo iff $R_0$ is left quasi-duo.
\end{corollary}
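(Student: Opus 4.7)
The plan is to deduce both statements directly from Theorem \ref{charakteryzacja} and the left/right symmetry of the ideal $A(R)$ established in Proposition \ref{anihilator}(i).

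For part 1, I would simply invoke the forward direction of Theorem \ref{charakteryzacja}: the hypothesis that $R$ is right quasi-duo gives at once that $R_0$ is right quasi-duo (and, as a bonus, that $R/A(R)$ is commutative).

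For part 2, the crucial observation is that the definition of $A(R)$ is intrinsic to $R$ and, by Proposition \ref{anihilator}(i), agrees with both the ``left'' variant $A_l$ and the ``right'' variant $A_r$. Consequently, the condition ``$R/A(R)$ is commutative'' is a single side-independent statement. Since $R$ is assumed right quasi-duo, Theorem \ref{charakteryzacja} supplies commutativity of $R/A(R)$ once and for all. Thus the criterion of Theorem \ref{charakteryzacja} for $R$ to be left quasi-duo reduces to its remaining hypothesis, namely that $R_0$ be left quasi-duo. This gives the ``if'' direction; the ``only if'' direction is again immediate from the forward implication of Theorem \ref{charakteryzacja} applied on the left.

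There is no real obstacle here: the whole content is that Proposition \ref{anihilator}(i) makes $A(R)$ side-symmetric, so all left/right asymmetry in the quasi-duo condition is already concentrated in $R_0$. The corollary is just a bookkeeping consequence of the characterization theorem.
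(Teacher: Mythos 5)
Your proposal is correct and is exactly the argument the paper intends: the corollary is stated as an immediate consequence of Theorem \ref{charakteryzacja}, and you correctly identify the one point that needs checking, namely that Proposition \ref{anihilator}(i) identifies $A(R)$ (a priori defined via maximal right ideals) with the side-symmetric sets $A_l=A_r$, so the commutativity of $R/A(R)$ obtained from the right quasi-duo hypothesis is the same condition appearing in the left-handed version of the theorem. Nothing further is needed.
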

We know, by the remark made just before Proposition
\ref{anihilator},  that $A(R)=J(R)$, provided   $R$ is strongly
$\mathbb{Z}$-graded. Thus, by Theorem \ref{charakteryzacja}, we
get:

 \begin{corollary}
 \label{cor1}
 Suppose that $R$ is strongly $\mathbb
Z$-graded. Then $R$ is right quasi-duo iff $R$ is left quasi-duo
iff $R/J(R)$ is commutative.
\end{corollary}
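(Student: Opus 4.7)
The plan is to combine Theorem \ref{charakteryzacja} with the observation recorded just before Proposition \ref{anihilator}, namely that $A(R)=J(R)$ whenever $R$ is strongly $\mathbb{Z}$-graded. (Recall the reason: in a strongly graded ring one has $R_0=R_nR_{-n}$ for every $n$, so no maximal right ideal can contain all $R_n$ with $n\neq 0$; hence $\mathcal{B}=\emptyset$, $B(R)=R$, and $A(R)=A(R)\cap B(R)=J(R)$.)

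Granting this identification, Theorem \ref{charakteryzacja} specializes to: $R$ is right (respectively left) quasi-duo iff $R_0$ is right (respectively left) quasi-duo and $R/J(R)$ is commutative. So both implications ``$R$ right quasi-duo $\Rightarrow R/J(R)$ commutative'' and ``$R$ left quasi-duo $\Rightarrow R/J(R)$ commutative'' are immediate consequences of that theorem.

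For the remaining direction I would deliberately \emph{not} invoke Theorem \ref{charakteryzacja} again, since doing so would oblige me to verify separately that $R_0$ is quasi-duo on both sides. Instead, I would use the elementary observation that any commutative ring is quasi-duo on both sides, so commutativity of $R/J(R)$ makes $R/J(R)$ both left and right quasi-duo; then the remark in the introduction that $R$ is right (left) quasi-duo iff $R/J(R)$ is right (left) quasi-duo transports this back up to $R$, completing the three-way equivalence. There is no real obstacle: the whole substance of the corollary is packaged in the identity $A(R)=J(R)$ for strongly graded rings, and the symmetric handling of the left/right sides is forced by the fact that the condition ``$R/J(R)$ commutative'' is self-dual.
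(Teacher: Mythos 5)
Your proof is correct and follows essentially the same route as the paper, which derives this corollary precisely by combining Theorem \ref{charakteryzacja} with the remark that $A(R)=J(R)$ (since $\mathcal{B}=\emptyset$) for strongly $\mathbb{Z}$-graded rings. Your way of closing the direction ``$R/J(R)$ commutative $\Rightarrow$ $R$ quasi-duo on both sides'' via the introductory remark that $R$ is right (left) quasi-duo iff $R/J(R)$ is, rather than re-invoking Theorem \ref{charakteryzacja}, is a clean way to supply the one detail the paper leaves implicit, since it sidesteps having to check separately that $R_0$ is quasi-duo.
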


 Now, as an application of Theorem \ref{charakteryzacja}, we will
get   characterizations of right (left) quasi-duo skew polynomial
rings and skew Laurent  polynomial rings obtained in \cite{LMP}.

Let $\sigma$ be an endomorphism of a ring $S$ and $S[x;\sigma]$ be
the associated skew polynomial ring  with coefficients from $S$
written on the left. Denote by $N(S)$ the set $\{ s\in S\mid
s\sigma (s)\cdots {\sigma}^n(s)=0$, for some positive integer
$n\}$. Clearly $N(S)= \{ s\in S\subseteq S[x;\si] \mid (sx)^n=0$,
for some positive integer $n\}$.  Let $N(S)[x;\si]$ be the set of
all polynomials from $S[x;\si]$ which have all their coefficients
in $N(S)$. Notice also that $\si(N(S))\subseteq N(S)$. Thus, if
$N(S)\lhd S$ then $N(S)[x;\si]\lhd S[x;\si]$, $\si$ induces an
endomorphism, also denoted by $\si$, on $S/N(S)$ and
$(S/N(S))[x;\si]\simeq S[x;\si]/N(S)[x;\si]$.

\begin{lemma}
\label{radykal Jacobsona poly} Suppose that    the skew polynomial
ring $S[x;\si]$  is right (left) quasi-duo. Then
$J(S[x;\si])\subseteq N(S)[x;\si]\subseteq A(S[x;\si])$.
 \end{lemma}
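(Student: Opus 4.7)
The plan is to handle the right quasi-duo case (the left case being symmetric) by viewing $R = S[x;\sigma]$ as a $\mathbb{Z}$-graded ring with $R_n = Sx^n$ for $n \geq 0$ and $R_n = 0$ otherwise. I will exploit one standing observation throughout: since $R/J(R)$ is a subdirect product of division rings (as is always the case for a Jacobson-semisimple quasi-duo ring), $R/J(R)$ is reduced, so every nilpotent element of $R$ lies in $J(R)$.

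To obtain $N(S)[x;\sigma] \subseteq A(R)$, I would first show that $N(S)$ is a left ideal of $S$: for $a \in N(S)$, the relation $(ax)^k = a\sigma(a)\cdots\sigma^{k-1}(a)\,x^k = 0$ for some $k$ makes $ax$ nilpotent, hence $ax \in J(R)$; then for $s \in S$, $(sa)x = s(ax) \in J(R)$ is nilpotent too, forcing $sa \in N(S)$, with additive closure handled analogously. With this in hand, for $f = \sum a_i x^i \in N(S)[x;\sigma]$ I use the identity $A(R) = A_l$ from Proposition \ref{anihilator}(i) and check that $R_n f \subseteq J(R)$ for each $n \geq 1$: a generator $sx^n \cdot f = \sum s\sigma^n(a_i)\,x^{n+i}$ has every coefficient $s\sigma^n(a_i)$ in $N(S)$ (using $\sigma$-invariance of $N(S)$ together with the left-ideal property), so $s\sigma^n(a_i)\,x \in J(R)$, and multiplication on the right by $x^{n+i-1}$ (valid since $n+i-1 \geq 0$) keeps us inside $J(R)$.

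For the reverse inclusion $J(R) \subseteq N(S)[x;\sigma]$, I take $f = \sum a_i x^i \in J(R)$ and use Bergman's theorem, part (i) of Theorem \ref{bergman}, to reduce to $a_i x^i \in J(R)$. The naive step of applying Bergman (ii) directly is the main obstacle: for $i \geq 1$ it only yields the ``$\sigma^i$-nilpotency'' $a_i \sigma^i(a_i)\sigma^{2i}(a_i)\cdots = 0$, which is strictly weaker than membership in $N(S)$. I would circumvent this by first showing $a_i \in A(R)$: for $i = 0$ this is automatic since $J(R) \subseteq A(R)$ (from $J(R) = A(R) \cap B(R)$); for $i \geq 1$ I would invoke Theorem \ref{idealy maksymalne}, noting that for any $M \in \mathcal A$, $R/M$ is a field and $x \notin M$ (else $Sx^k \subseteq M$ for every $k \geq 1$, contradicting $M \in \mathcal A$), whence $\overline x$ is a unit and $\overline{a_i x^i} = 0$ forces $a_i \in M$. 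Intersecting over $\mathcal A$ gives $a_i \in A(R) = A_r$ by Proposition \ref{anihilator}(i), so $a_i x \in a_i R_1 \subseteq J(R)$; Bergman (ii) applied now in degree $1$ yields the genuine $\sigma$-nilpotency $a_i \sigma(a_i)\cdots\sigma^{k-1}(a_i) = 0$, i.e., $a_i \in N(S)$.
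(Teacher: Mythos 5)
Your proof is correct, and for the harder inclusion it takes a genuinely different route from the paper's. For $N(S)[x;\sigma]\subseteq A(S[x;\sigma])$ the two arguments are essentially the same in substance (elements of $N(S)x$ are nilpotent, hence lie in $J$, and Proposition \ref{anihilator}(i) converts this into membership in $A_l=A(R)$); the paper is slightly slicker in that it only proves $N(S)\subseteq A(R)$ and then uses that $A(R)=A_l$ is a two-sided ideal, whereas you verify the $A_l$-condition coefficient by coefficient after first establishing that $N(S)$ is a left ideal --- both work, and your preliminary left-ideal step is a correct (if not strictly necessary) detour. The real divergence is in $J(S[x;\sigma])\subseteq N(S)[x;\sigma]$. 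The paper runs a descent on the degree: from $ax^n\in J$ it gets $x^na\in J$ by nilpotency, deduces $Sx^mx^{n-1}a\subseteq J$, hence $x^{n-1}a\in A(R)$ and (via Proposition \ref{anihilator}) back into $J$, iterating down to $xa\in J$. You instead note that each $M\in\mathcal A$ is two-sided with $R/M$ a division ring in which $\overline{x}$ is a unit (your check that $x\notin M$ is right, using that $M$ is two-sided), so $a_ix^i\in J\subseteq M$ forces $a_i\in M$, and intersecting over $\mathcal A$ gives $a_i\in A(R)=A_r$ in one step; then $a_ix\in J$ and Bergman(ii) yield $a_i\in N(S)$. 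Your version buys a single reduction in place of the paper's iteration, and you correctly identified and sidestepped the trap that nilpotency of $a_ix^i$ only gives $\sigma^i$-nilpotency; the cost is an appeal to Theorem \ref{idealy maksymalne}, which is heavier machinery than the paper needs here --- though in fact all you use from it is that $R/M$ is a division ring, which already follows from $M$ being a two-sided maximal right ideal, so your argument could be stated without that theorem at all.
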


\begin{proof}   Since $S[x;\si]$  is right (left) quasi-duo, the ring
$S[x;\si]/J(S[x;\si])$  is reduced,  so every nilpotent element of
$S[x;\si]$ belongs to $J(S[x;\si])$. Thus, in particular,
$xN(S)\subseteq J(S[x;\si])$ and consequently $Sx^nN(S)\subseteq
J(S[x;\si])$, for all  $n> 0$. The ring $S[x;\si]$ is
$\mathbb{Z}$-graded in the canonical way and the last inclusion
together with Proposition \ref{anihilator}(i) yield $N(S)\subseteq
A(S[x;\si])$. This shows that $N(S)[x;\si] \subseteq A(S[x;\si])$.

Let $ax^n\in J(S[x;\si])$, for some $n>0$. Then, by Theorem
\ref{bergman}, $ax^n$ and $x^na$ are also  nilpotent elements of
$S[x;\si]$ and so $x^na\in J(S[x;\si])$. Hence
$Sx^mx^{n-1}a\subseteq J(S[x;\si])$, for all $m>0$ and Proposition
\ref{anihilator}(i) shows that $x^{n-1}a\in J(S[x;\si])$.
Repeating this procedure we obtain $xa\in J(S[x;\si])$ and Theorem
\ref{bergman} implies that $a\in N(S)$. Since $J(S[x;\si])$ is a
homogenous ideal, we obtain $J(S[x;\si])\subseteq N(S)[x;\si]$.
\end{proof}

\begin{corollary}
{\em (\cite{LMP})} $S[x;\sigma]$ is right (left) quasi-duo if and
only if $S$ is right (left) quasi-duo, $N(S)\lhd S$,
$J(S[x;\sigma])=J(S)\cap N(S)+N(S)[x;\sigma]x$ and
$(S/N(S))[x;\sigma]$ is a commutative ring.
\end{corollary}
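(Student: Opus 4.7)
The plan is to view $R := S[x;\si]$ as a $\mathbb{Z}$-graded ring, with $R_n = Sx^n$ for $n\ge 0$ and $R_n = 0$ for $n<0$, and apply Theorem \ref{charakteryzacja}; the identifications needed to match the statement are then read off from Lemma \ref{radykal Jacobsona poly}, Proposition \ref{anihilator} and Theorem \ref{bergman}.

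For the forward implication, assume $R$ is right quasi-duo. Theorem \ref{charakteryzacja} at once gives that $S = R_0$ is right quasi-duo and $R/A(R)$ is commutative. Since $R/J(R)$ is then reduced, Theorem \ref{bergman}(ii) shows that for $s\in S$ one has $sx\in J(R)$ if and only if $sx$ is nilpotent, i.e.\ if and only if $s\in N(S)$. This description makes $N(S)$ a left ideal of $S$ (because $J(R)$ is two-sided), while the inclusion $N(S)\subseteq A(R)=A_r$ from Lemma \ref{radykal Jacobsona poly} combined with Proposition \ref{anihilator}(i) gives $N(S)Sx\subseteq J(R)$, so $N(S)$ is also a right ideal; hence $N(S)\lhd S$. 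To obtain the commutativity of $(S/N(S))[x;\si]$, I would show that in fact $A(R) = N(S)[x;\si]$: the inclusion $\supseteq$ is Lemma \ref{radykal Jacobsona poly}, and for $\subseteq$ one decomposes $r\in A(R)$ homogeneously as $r = r_0 + r_1 x + \dots + r_k x^k$; Proposition \ref{anihilator}(ii) places $r - r_0\in J(R)\subseteq N(S)[x;\si]$, and then $r_0 x = rx - (r-r_0)x\in A(R)R_1 + J(R)\subseteq J(R)$ forces $r_0\in N(S)$. Thus $R/A(R) = (S/N(S))[x;\si]$ is commutative.

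For the formula for $J(R)$, Bergman's theorem makes $J(R)$ homogeneous, so it suffices to describe each piece $J_n := J(R)\cap R_n$. Lemma \ref{radykal Jacobsona poly} gives $J_n\subseteq N(S)x^n$; for $n\ge 1$ equality holds since $N(S)x\subseteq J(R)$ (nilpotence) and $J(R)$ is two-sided; for $n=0$ one has $J_0\subseteq N(S)$ by the Lemma, and $J_0\subseteq J(S)$ by reading off the degree-zero coefficient of an inverse of $1+ab$ in $R$ for $a\in J_0$, $b\in S$. The reverse inclusion $J(S)\cap N(S)\subseteq J(R)$ is the main computation: for $a\in J(S)\cap N(S)$ and an arbitrary $p(x) = b_0 + p_1(x)x\in R$, the element $1 + ap(x) = (1 + ab_0) + ap_1(x)x$ admits $(1 + ab_0)^{-1}\in S$ as a partial left inverse, leaving a factor of the form $1 + r(x)x$ with $r(x)\in N(S)[x;\si]$, which is invertible because $r(x)x\in A(R)R_1\subseteq J(R)$ by Proposition \ref{anihilator}(i); a symmetric argument yields a right inverse, so $a\in J(R)$.

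For the backward implication I would invoke Theorem \ref{charakteryzacja} once more: $S$ is right quasi-duo by hypothesis, so it remains to show that $R/A(R)$ is commutative. The hypothesis $J(R) = J(S)\cap N(S) + N(S)[x;\si]x$ gives $N(S)[x;\si]x\subseteq J(R)$, and together with $N(S)\lhd S$ this promotes at once to $N(S)[x;\si]R_n\subseteq J(R)$ for every $n\ge 1$, i.e.\ $N(S)[x;\si]\subseteq A(R)$. Therefore $R/A(R)$ is a quotient of $(S/N(S))[x;\si]$, which is commutative by hypothesis, and we conclude. The main obstacle throughout is the forward direction's inclusion $J(S)\cap N(S)\subseteq J(R)$; it is this step that forces the splitting of a test polynomial into its constant and non-constant parts and the use of Proposition \ref{anihilator}(i) to absorb the non-constant tail.
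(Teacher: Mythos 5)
Your argument is correct, and its overall skeleton --- reduce everything to Theorem \ref{charakteryzacja} via the identification $A(S[x;\sigma])=N(S)[x;\sigma]$, with Lemma \ref{radykal Jacobsona poly} supplying one inclusion --- is the same as the paper's. Where you genuinely diverge is in establishing the formula for $J(S[x;\sigma])$. The paper gets it in one line from the identity $J(R)=A(R)\cap B(R)$ together with the explicit description $B(S[x;\sigma])=J(S)+S[x;\sigma]x$ (the general formula for $B(R)$ stated before Proposition \ref{anihilator}, with $U=0$ here because the negative components vanish); intersecting $N(S)[x;\sigma]$ with $J(S)+S[x;\sigma]x$ immediately yields $J(S)\cap N(S)+N(S)[x;\sigma]x$. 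You instead compute $J(R)$ degree by degree and prove the hard inclusion $J(S)\cap N(S)\subseteq J(R)$ by an explicit quasi-regularity computation, factoring $1+ap(x)=(1+ab_0)\bigl(1+r(x)x\bigr)$ and absorbing the tail into $A(R)R_1\subseteq J(R)$ via Proposition \ref{anihilator}(i). Your route is more elementary and self-contained; the paper's is shorter because it exploits the $\mathcal A$/$\mathcal B$ machinery it has already built. Both are valid, and your backward direction (needing only the inclusion $N(S)[x;\sigma]\subseteq A(R)$ rather than equality) matches the paper's.

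One small step needs patching. When you prove $A(R)\subseteq N(S)[x;\sigma]$ you apply Proposition \ref{anihilator}(ii) to $r-r_0$, which presupposes $r-r_0\in A(R)$, i.e.\ that $A(R)$ is a homogeneous ideal. This is true --- since $J(R)$ is homogeneous by Theorem \ref{bergman}(i), the defining condition $rR_n\subseteq J(R)$ of $A_r$ passes to the homogeneous components of $r$ --- but it is not stated anywhere, so you should justify it. More simply, the detour is unnecessary: $rx\in A(R)R_1\subseteq J(R)\subseteq N(S)[x;\sigma]$ already forces every coefficient of $r$ into $N(S)$, because right multiplication by $x$ leaves the left-hand coefficients untouched.
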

\begin{proof}
 Suppose that the ring $S[x;\sigma]$ is right (left) quasi-duo.
  Then, by  Proposition \ref{anihilator}(i),
$A(S[x;\sigma]x)\subseteq J(S[x;\sigma])$. Thus, by  Lemma
\ref{radykal Jacobsona poly}, we get $A(S[x;\si])=N(S)[x;\si]$.
This implies that $N(S)$ is an ideal of $S$. Now, by Theorem
\ref{charakteryzacja}, the ring $(S/N(S))[x;\si]\simeq
S[x;\si]/N(S)[x;\si]$ is commutative.

 Since
$B(S[x;\sigma])=J(S)+S[x;\sigma]x$ and $J(T)=A(T)\cap B(T)$, we
also obtain $J(S[x;\sigma])=J(S)\cap N(S)+N(S)[x;\sigma]x$.

Conversely, by making use of  Proposition \ref{anihilator}(i),  it
is evident that when  $J(S[x;\sigma])=J(S)\cap
N(S)+N(S)[x;\sigma]$, then $A(S[x;\sigma])=N(S)[x;\sigma]$. Now if
the ring $(S/N(S))[x;\sigma]$ is  commutative and $S$ is right
(left) quasi-duo, then $S[x;\sigma]$ is right (left) quasi-duo, by
Theorem \ref{charakteryzacja}.
\end{proof}

\begin{corollary}{\em (\cite{LMP})}
Let $\sigma$ be an automorphism of a ring $S$. Then the skew
Laurent
 polynomial ring $S[x,x^{-1};\sigma]$ is right (left) quasi-duo if and only
if $N(S)\lhd S$, $J(S[x,x^{-1};\sigma])=N(S)[x,x^{-1};\sigma]$ and
$(S/N(S))[x,x^{-1};\sigma]$ is a commutative ring.
\end{corollary}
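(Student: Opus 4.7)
The key observation is that, since $\sigma$ is an automorphism, the ring $T:=S[x,x^{-1};\sigma]$ is strongly $\mathbb{Z}$-graded with $T_n=Sx^n$, because $T_nT_{-n}=Sx^n\cdot Sx^{-n}=S\sigma^n(S)=S$ for every $n$. Consequently Corollary \ref{cor1} applies, and $T$ being right quasi-duo is equivalent to $T$ being left quasi-duo and to $T/J(T)$ being commutative. The plan is to show, under the quasi-duo hypothesis, that $J(T)=N(S)[x,x^{-1};\sigma]$, which translates the single condition ``$T/J(T)$ commutative'' into the three conditions listed in the statement.

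For the forward direction, suppose $T$ is right quasi-duo, so $T/J(T)$ is a subdirect sum of division rings and hence reduced. If $s\in N(S)$, then $sx$ is nilpotent by the defining relation of $N(S)$, so $sx\in J(T)$, and multiplying by the unit $x^{-1}\in T$ yields $s\in J(T)$. Conversely, if $s\in J(T)\cap S$, then $sx\in J(T)$, so $1+sx$ is invertible, and since $sx$ is homogeneous of nonzero degree, Theorem \ref{bergman}(ii) forces $sx$ to be nilpotent, i.e.\ $s\in N(S)$. Thus $J(T)\cap S=N(S)$, which in particular yields $N(S)\lhd S$. Combining the homogeneity of $J(T)$ from Theorem \ref{bergman}(i) with the invertibility of each $x^n$, one obtains
$$J(T)=\bigoplus_{n\in\mathbb{Z}}\bigl(J(T)\cap Sx^n\bigr)=\bigoplus_{n\in\mathbb{Z}}\bigl(J(T)\cap S\bigr)x^n=N(S)[x,x^{-1};\sigma],$$
and $(S/N(S))[x,x^{-1};\sigma]\cong T/J(T)$ is therefore commutative.

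The converse is immediate: the stated hypotheses give $T/J(T)\cong(S/N(S))[x,x^{-1};\sigma]$ commutative, so $T$ is both right and left quasi-duo by Corollary \ref{cor1}. No serious obstacle is anticipated, as the hard work has been absorbed into Theorem \ref{bergman} and the strongly graded version in Corollary \ref{cor1}; the only subtle point is the essential use of the unit $x^{-1}\in T$, both to pass from $N(S)x\subseteq J(T)$ to $N(S)\subseteq J(T)$ and to identify each homogeneous component $J(T)\cap Sx^n$ with $(J(T)\cap S)x^n$, which is precisely why the Laurent case is cleaner than the skew polynomial case treated in the preceding corollary.
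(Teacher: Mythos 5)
Your proof is correct and follows essentially the same route as the paper: both exploit that $S[x,x^{-1};\sigma]$ is strongly graded (so Corollary \ref{cor1}, equivalently Theorem \ref{charakteryzacja} with $A(T)=J(T)$, reduces everything to commutativity of $T/J(T)$), and both identify $J(T)$ with $N(S)[x,x^{-1};\sigma]$ via the nilpotence of $N(S)x$ for one inclusion and Theorem \ref{bergman} for the other. Your version merely spells out the use of the unit $x^{-1}$ in matching the homogeneous components, which the paper leaves implicit.
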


\begin{proof}
 Since $S[x,x^{-1};\sigma]$ is a strongly graded,
$A(S[x,x^{-1};\sigma])=J(S[x,x^{-1};\sigma])$.

 Suppose now that $S[x,x^{-1};\sigma]$ is right (left) quasi-duo.
 Then, as    $N(S)x$ consists of nilpotent elements, $N(S)[x,x^{-1};\sigma]\subseteq
J(S[x,x^{-1};\sigma])$. The opposite inclusion follows immediately
from Theorem \ref{bergman}. Obviously $N(S)\lhd S$ and by Theorem
\ref{charakteryzacja}, $(S/N(S))[x,x^{-1};\sigma]$ is a
commutative ring. This proves the only if" part. The "if" part is
a direct consequence of Theorem \ref{charakteryzacja}. \end{proof}

\end{document}